\documentclass[12pt]{iopart}

\usepackage{iopams}
\usepackage{enumerate}
\usepackage{url}

\urlstyle{same}
 
\usepackage[colorinlistoftodos]{todonotes}

\expandafter\let\csname equation*\endcsname\relax
\expandafter\let\csname endequation*\endcsname\relax
\usepackage{amsmath}
\usepackage{amssymb}
\usepackage{tikz-cd}
\usepackage{graphicx}

\usepackage[margin=0cm]{caption}

\usepackage{amsthm}

\usepackage{placeins}

\usepackage{verbatim}

\newtheorem{theorem}{Theorem}[section]

\newtheorem{definition}{Definition}[section]

\begin{document}

\title[Control landscapes for a class of non-linear dynamical systems]{Control landscapes for a class of non-linear dynamical systems: sufficient conditions for the absence of traps}

\author{Benjamin Russell}
\address{Department Of Chemistry, Princeton University, Princeton, NJ 08544, USA}
\ead{br6@princeton.edu}

\author{Shanon Vuglar}
\address{Department Of Chemistry, Princeton University, Princeton, NJ 08544, USA}
\ead{svuglar@princeton.edu}

\author{Herschel Rabitz}
\address{Department Of Chemistry, Princeton University, Princeton, NJ 08544, USA}
\ead{hrabitz@princeton.edu}

\begin{abstract}
We establish three tractable, jointly sufficient conditions for the control landscapes of non-linear control systems to be trap free comparable to those now well known in quantum control.
In particular, our results encompass end-point control problems for a general class of non-linear control systems of the form of a linear time invariant term with an additional state dependent non-linear term.
Trap free landscapes ensure that local optimization methods (such as gradient ascent) can achieve monotonic convergence to effective control schemes in both simulation and practice.
Within a large class of non-linear control problems, each of the three conditions is shown to hold for all but a null set of cases.
Furthermore, we establish a Lipschitz condition for two of these assumptions; under specific circumstances, we explicitly find the associated Lipschitz constants.
A detailed numerical investigation using the D-MOPRH control optimization algorithm is presented for a specific family of systems which meet the conditions for possessing trap free control landscapes.
The results obtained confirm the trap free nature of the landscapes of such systems.






\end{abstract} 

\maketitle

\section{Introduction}

\subsection{Introduction to Control Landscapes}

Control landscapes were introduced via the study of quantum control \cite{chakrabarti2007quantum,rabitz2000whither,rabitz2004quantum,moore2012laser,rabitz2003shaped} following the observation that local optimization techniques, for example gradient ascent, typically succeed in shaping effective  laser pulses for a variety of control objectives.
Generalizing from the quantum setting, a control landscape is an objective function which depends on the response of the dynamics of the system under study as a function of the control.
The study of control landscapes involves analyzing the structure of the critical points of such composite functions.

Non-linear control finds broad application in engineering and mathematics \cite{slotine1991applied, isidori2013nonlinear} and non-linear differential equations have, in general, very broad application in physics, biology, chemistry, and in engineering \cite{strogatz2014nonlinear, kaplan2012understanding, wiggins2003introduction, nayfeh2008applied}.
Obtaining maximally general mathematically sufficient criteria for a given system to possess a trap free control landscape is therefore of some importance.
Accordingly, this work extends recent results on quantum control landscapes to a class of non-linear control systems on arbitrary manifolds.

\subsection{Summary of Main Results}

We consider controlling systems of the form
\begin{eqnarray}
  \frac{d x(t)}{dt} = F(x(t),w(t))
\end{eqnarray}
which evolve on a smooth (which will henceforth be assumed without explicit statement) manifold $M$ under the influence of a smooth control $w$.
The fidelity of the control is evaluated using a smooth cost function $J: M \rightarrow [0,1]$.
We show that three specific assumptions about the function $F$ and final time $T$, and a single assumption about the cost function $J$, are sufficient for the overall fidelity $F(w) := J(V_T[w, x_0])$ to be free from local optima.
Here, $V_T$ (known as the end-point map) maps a control to the state that solves the underlying dynamical equation above and $x_0$ is the system's initial state.
These assumptions guarantee that effective controls $w$ can be found using local optimization algorithms.
We go on to assess under which conditions, and how typically, each of these assumptions holds.

The related concept of a fitness landscape (as introduced in evolutionary biology in \cite{wright1932roles}) has, alongside it's now widespread use in biology, led to the development of a variety of novel approaches to optimization such as genetic algorithms and related methods \cite{brownlee2011clever}.
The contradistinction between the concept of a control landscape as studied in this work and the fitness landscapes introduced in \cite{wright1932roles} is a largely conceptual but important distinction.
In the biological case, the landscape is the biological fitness as a function of the genome of an organism and is concerned with the autonomous changes in the dependent variables rather than changes to control variables to which experimenters have access.

\section{Control Landscapes}

The desire to control quantum systems using shaped electromagnetic radiation to implement specific unitary transformations \cite{hsieh2008optimal}, create specific quantum states \cite{weinacht1999controlling,turinici2001quantum}, or control a specific quantum observable \cite{wu2008control,chakrabarti2008quantum,nanduri2013exploring} is driven by several goals including, but by no means limited to, the selective breaking of chemical bonds \cite{tibbetts2013optimal,shapiro2003principles,seideman1989coherent}, the control of chemical reactions more generally \cite{shapiro1997quantum}, time optimal quantum computation \cite{carlini2007time,carlini2006time,wang2015quantum,russell2015zermelo,russell2014zermelo,brody2015time,brody2015solution,brody2015elementary,khaneja2001time,arenzspeed}, and various others \cite{dong2010quantum}.

Initially there was a widespread expectation that effective quantum control schemes would be difficult to either discover or design.
This perspective stemmed from both the counterintuitive nature of the response of quantum dynamics to stimulation by external control fields and from the fact that the mathematical space of all possible control fields is of very high dimension and was common throughout quantum control in theory, numerical simulation, and experimental practice.
The former observation was believed to suggest that the control landscapes of quantum systems, defined as the objective fidelity as a function of the available control variables, would be a complicated function typically possessing many local optima.
Such local optima would preclude the possibility of using learning control with local gradient based algorithms to find high fidelity pulse shapes as they are susceptible to converging to local optima.
The latter point was believed to indicate that the \emph{curse of dimensionality} would mar search efforts and render them intractable. 

As a large body of numerical results \cite{riviello2014searching, sun2015measuring} (and later experimental results \cite{sun2014experimental,sun2015experimental}) were gathered, it became clear that gradient ascent and other local optimization methods \cite{khaneja2005optimal,maday2003new,schirmer2011efficient,krotov1983iterative,machnes2016gradient} for obtaining effective shaped laser control pulses were successful in the vast majority of applications.
In light of these results, explanations were put forth for why effective quantum controls were easier to find than initially anticipated \cite{ho2006effective}.
Furthermore, questions were posed as to the fullest possible scope of this body of results, including applications in controlled chemical, physical, and biological processes as well as many engineering applications \cite{rabitz2012control}.

Closed finite dimensional systems with linear coupling to a control field $E(t)$ have been found to behave favorably using gradient methods.
Such systems are governed by the following form of Schr\"odinger equation
\begin{align}
	\frac{d U_t}{dt} = \left(iH_0 + E(t)i H_c \right)U_t
\end{align}
where $H_0$ represents the free systems dynamics in the absence of control and $H_c$ represents the coupling to the control field.
This form is known in the physical sciences as the dipole approximation and in mathematics as a right invariant, affine bilinear control system on the unitary group.
Such equations are of particular interest in quantum control for quantum computation as this class includes models of systems such as those found in NMR for which first order coupling to a control field is dominant.

The unanticipated success of simple control optimization methods in fulfilling varied quantum control desiderata led to a detailed theoretical investigation into the topology of the set of critical points in control space, i.e., controls for which the gradient of the fidelity is zero \cite{wu2012singularities}.
It was found that three conditions on a quantum system are sufficient for the associated quantum control landscape to have the same critical point structure as the chosen cost function $J$.
These conditions are:
\begin{enumerate}
  \item Controllability -- every goal state can be implemented by some control $w$.
  \item Local controllability -- the end-point state can be freely, infinitesimally varied by varying the control $w \mapsto w + \delta w$.
  \item Unconstrained control resources -- $w$ can be any smooth function.
\end{enumerate}
To explain the pervasive success of simple optimization techniques observed in quantum control, the range of validity for each of the three assumptions was assessed.

The first assumption, controllability, was shown to hold for \emph{almost all} (in the measure theoretic sense \cite{halmos2014measure}) quantum systems in the dipole approximation \cite{altafini2002controllability} (with one or more control fields).
It was later shown to hold for almost all systems only possessing two body internal interactions \cite{takui2016electron, 2017arXiv170100216M}.
Furthermore and analogously, a significantly relaxed version of the second assumption has been shown to hold for almost all controlled quantum systems \cite{aatfme}.
However, a result analogous to the case of systems possessing two body interactions alone has not yet been forthcoming for the the second assumption.

A central conclusion of the culminated investigation into quantum control landscapes hitherto is that limitations on control resources are typically the determining factor as to whether landscape traps are present.
Hence, the efficacy of gradient based (or other local) control optimization algorithms is largely contingent upon the available control resources.

\subsection{Controlled Systems}

Geometric control theory \cite{jurdjevic2008geometric} is concerned with a far broader class of dynamical systems than closed, finite dimensional quantum systems with linear coupling to a control field.
In this work, we extent both the notions and some results from quantum control systems to a class of non-linear control systems.
This goal is motivated by the observation that many (but not all) of the known properties of the control landscapes of quantum systems do not depend critically on the unitary character of quantum dynamics.
Rather, they depend more strongly on the underlying smooth structure of the manifold of the unitary group $U(n)$ (as opposed to the algebraic structure of this manifold as a Lie group).
We focus on control of non-linear systems of the following form:
\begin{definition}{}
  A \emph{first order control system} (FOCS) on a manifold $M$ is given by:
  \begin{eqnarray}
      \label{GFOCS}
      \frac{d x(t)}{dt} = F(x(t),w(t))
  \end{eqnarray}
wherein $F \in \Gamma[M]$ is a smooth vector field.
Henceforth and standardly \cite{leeintro} $\Gamma(M)$ denotes the set of all smooth vector fields on $M$.

\end{definition}

\subsection{Cost Functions}
\label{costfunction}

Cost functions can be broadly categorized as possessing two types of term, namely, run-time and terminal costs.
Runtime costs contain `fluence' type terms such as $\int_0^T w(t)^2 dt$, whereas, terminal costs depend only on the end-point $x(T) = V_T[w, x_0]$ (and not directly on $w(t)$ during an evolution).
Unlike in typical quantum control applications, in engineering applications it is often vital to minimize a certain run-time cost \cite{geering2007optimal}.
A canonical example is that of LTI systems with a quadratic run-time cost, known as the linear quadratic regulator problem \cite{kwakernaak1972linear}.
However, we restrict attention to the case of finding a control which drives the state to a desired goal at a prespecified terminal time; only cost functions corresponding to terminal costs are considered.
We present this as a first step towards broadening the scope of control landscape analysis and expect that future work will incorporate additional cost function types.

We consider cost functions $J:M \rightarrow [0,1]$ which have the following properties:
\begin{enumerate}
  \item smooth;
  \item possessing only global maxima, global minima, and possibly saddle type critical points (i.e., they posses no local optima);
  \item possessing a unique global maximum.
\end{enumerate}
Cost functions which have all three properties will be referred to as \emph{admissible} throughout this work.
Unless otherwise indicated, the term \emph{cost function} will henceforth be used to refer  to admissible cost functions which depend only on the system's end-point.

\subsection{Control Landscapes}

A control landscape for a system of the form $\eqref{GFOCS}$ is the value of the cost function as a function of the control: $J(V_T[w, x_0])$.
\begin{figure}[!h]
  \begin{minipage}[t]{0.45\textwidth}
    \includegraphics[width=\textwidth]{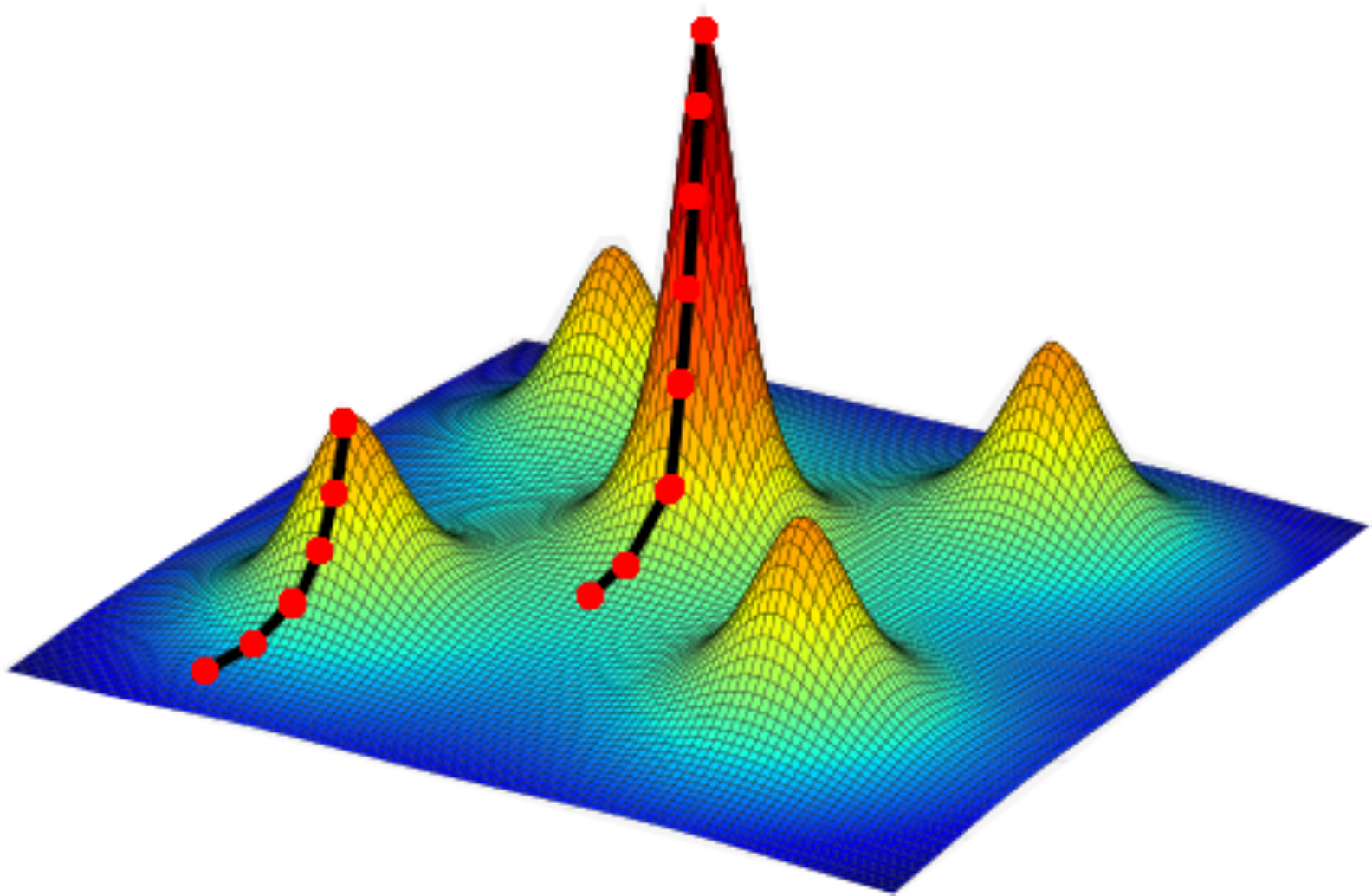}
	\caption{A cost function with two variables and a single global maximum but which also has local optima. These are shown trapping a gradient ascent (algorithm iterations shown by the red dots) optimization when two different initial values are chosen.}
\label{trapsandnot1}
\end{minipage}
  \hfill
  \begin{minipage}[t]{0.45\textwidth}
    \includegraphics[width=\textwidth]{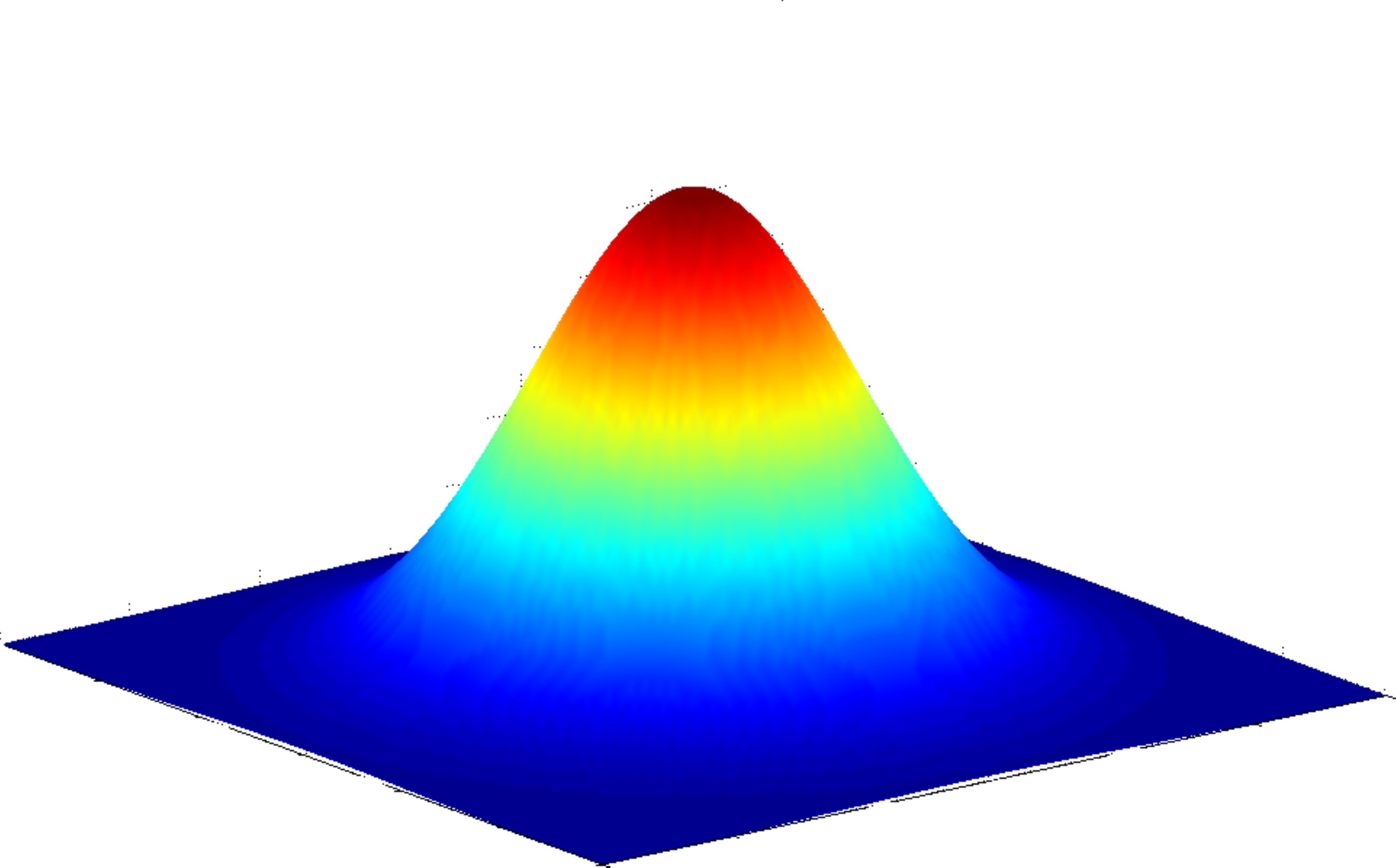}
	\caption{A cost function with two variables, a single global maximum, and with no local optima.}
	\label{trapsandnot2}    
  \end{minipage}
\end{figure}
Figures \ref{trapsandnot1} and \ref{trapsandnot2} show two juxtaposed control landscapes, one with traps and one without.
However, it is noteworthy that these figures show landscapes which depend on only two control variables, not on a time dependent control function as that scenario would be impossible to illustrate informatively.

\subsection{Traps and Critical Points}
\label{sec:crit}

The question of whether or not a given control landscape possesses traps is of crucial importance in determining the potential for successfully discovering effective controls using local optimization techniques.
Traps are defined as local optima -- at such points in control space many local optimization algorithms will terminate having converged to a sub-optimal control.
For example, gradient ascent will terminate at such points as they have fidelity gradient zero.
However, a landscape free from traps is insufficient to reach the full conclusion that convergence to a globally optimal control will be achieved.
Landscapes free from local optima may possess saddle type critical points which can encumber a local optimization algorithm.

A control landscape for a system of the form \eqref{GFOCS} has two possible types of critical points.
This can be observed directly from the functional chain rule:
\begin{align}
	\frac{\delta J}{\delta w} = \frac{\delta J}{\delta V_T} \circ \frac{\delta V_T}{\delta w} = 0.
\end{align}
In order for a control to be a critical point on the landscape it must satisfy: $\frac{\delta J}{\delta w} = 0$.
This can happen in two ways, neither of which excludes the other, which we now introduce.
\begin{definition}[]
  Controls satisfying $\frac{\delta J}{\delta V_T} = 0$ are defined as \emph{regular critical points}.
\end{definition}
These controls drive the system's end-point to a point for which the cost function $J$ is critical.
Note that if a critical point of $J$ were a saddle, then any control driving the system's end-point to that point
would itself be a saddle on the control landscape.
\begin{definition}[]
  Controls satisfying $\frac{\delta V_T}{\delta w} = 0$ are defined as \emph{singular critical points}.
\end{definition}
These controls are those for which it is not possible to infinitesimally vary the end-point map in the direction required to increase the cost function by infinitesimally varying the control.
More exactly, at such controls $d V_T\big|_{w} $ fails to be full rank and $\nabla J \big|_{V_T[w, x_0]}$ fails to be in the image of this derivative.

In quantum control, this latter type of critical point has been the subject of significant debate \cite{pechen2011there,rabitz2012comment,de2013closer} wherein it was conjectured by some that they could introduce true traps into the landscape \cite{pechen2011there,rabitz2012comment}.
No examples of such singular controls introducing traps have currently been proven to exist.
However, many cases for which such singular controls, i.e., singular critical controls, do introduce saddles into the control landscape have been constructed or numerically discovered \cite{wu2008control}.
These properties have primarily, but not exclusively, been shown for controls which are constant, or even zero, as functions of time;
such constant controls are by no means the only singular critical controls \cite{de2013closer,PhysRevA.86.013405}.
The neighborhoods of singular critical controls have been explored in numerical studies of quantum control landscapes \cite{riviello2014searching}.
It was found that that the radius of the basin of attraction (under gradient flow in control space) for each of the numerically obtained singular controls has, at most, an extremely small volume in control space.

\subsection{Example Cost Functions}

Depending on the manifold underlying the system in question, different choices of cost function can be appropriate.
However, in light of the discussion in previous sections, some cost functions are more amenable to creating trap free control landscapes than others.
Specifically, it is desirable to have a cost function $J: M \rightarrow [0,1]$ which is free from traps, as traps in the cost function result in traps at any corresponding points in control space.
If $p \in M$ is a local optimum of $J$, then any control $w$ s.t. $V_T[w, x_0] = p$ will be a regular critical point on the control landscape.

In light of these distinctions and to illustrate concepts, we describe two important cost functions and some of their properties.
In the case of quantum control with a goal propagator $G \in U(n)$, the cost function on the unitary group $J: U(n) \rightarrow [0, 1]$ is typically given by either $J(U) = \text{Re}\{Tr(U G^\dagger)\}$ or $J(U) = \big|Tr(U G^\dagger)\big|^2$.
Both of these functions have saddle points at any $U = G V^\dagger \begin{pmatrix} \pm 1 & \cdots & 0 \\ \vdots & \ddots & \vdots \\ 0 & \ldots & \pm 1 \end{pmatrix} V$ for any unitary $V$.
For $J(U) = \big|Tr(U^\dagger G)\big|^2$ there is only a single global optimum at $U=G$ upto a global change  in phase $U \mapsto e^{i \theta}U$.
For a clear discussion of the nature of the critical points of these two important cost functions in quantum control see \cite{de2013closer}.
The second cost function described above has a clear analogue on any compact, connected Lie group as it is (upto a constant multiple) given simply by the unique distance function associated to the bi-invariant metric.
While it is not the only possible choice on such spaces, it is a clear logical candidate with many favorable features.
Specifically, being bi-invariant, it treats all parts of the group `the same' in a sense strongly, but not wholly, analogous to the translation invariance of the Euclidean norm in $\mathbb{R}^N$.
In the case of a control system on $\mathbb{R}^{n}$, the Euclidean metric is a good choice for similar reasons, including translation invariance.
This norm yields the cost function $J(x) = ||x - G||$ wherein $G\in \mathbb{R}^n$ is the goal.
This cost function is highly favorable; it has no saddle points, no local optima, and a unique global optimum at the goal $G$.

\subsection{Augmented End-Point Map}

In following sections, it will become useful to understand the \emph{generic} properties of families of non-linear control landscapes.
We introduce the concept of \emph{structural parameters} (denoted $\lambda$) as distinct from control parameters defining a control $w$.
In the interest of generality, we take $\lambda$ to be drawn from a smooth manifold $S$ unless otherwise stated.
Accordingly, we consider parameterized families of non-linear control systems of the form:
\begin{eqnarray}
	\frac{d x(t)}{dt} = F(x(t), w(t), \lambda).
\end{eqnarray}
For such a family with parameters $\lambda \in S$ we call a property of the associated non-linear control landscape \emph{generic} if it holds for all $\lambda \in S$ other than for a null set of $\lambda$ values.

\section{Sufficient Conditions for a Trap Free Control Landscape in Non-Linear Control Systems}

Given a system of the form (\ref{GFOCS}), there are three assumptions which are jointly sufficient conditions for a trap free control landscape.
In this section, we first introduce the required definitions to state these assumptions and then go on to prove that they are sufficient conditions.

\subsection{Controllability}

\begin{definition}{}
	A system is called \emph{globally controllable} (or simply \emph{controllable}) if for every pair of points $p_0, p_1 \in M$ there exists a time $T$ and a control $w$ such that $V_T[w,p_0] = p_1$.
\end{definition}

\begin{definition}{}
A system is called \emph{globally fixed time controllable} (or simply \emph{fixed time controllable}) for a given time $T$ if for every pair of points $p_0, p_1 \in M$ there exists a control $w$ such that $V_T[w,p_0] = p_1$.
\end{definition}

Equivalently, fixed time controllability is the requirement that the end-point map $V_T$ be globally surjective on $M$ as a functional of $w$ where $w$ ranges over all smooth functions of time without restriction.

\subsection{Local Surjectivity/Transversality}

\begin{definition}
A system is called \emph{locally controllable} or \emph{locally surjective} if there are no singular controls.
The control $w$ is a singular control if $d V_T\Big|_{V_T[w,x_0]}$ is not full rank, i.e., its image is not full in the tangent space $T_{V_T[w,x_0]}M$, at the point $V_T[w,x_0] \in M$.
\end{definition}

\begin{definition}
A system is called \emph{transverse} (relative to a given submanifold $Q \subset M$) if there are no controls such that $\langle q, \ \delta V_T[\delta w] \rangle = 0$ for all $\delta w$ whenever $q$ is a tangent vector to $Q$.
This is equivalent to saying that there are no singular critical controls driving the system to a point within the submanifold $Q$.
\end{definition}
This second condition is far weaker than the the first.
Transversality only requires that it be possible to maneuver the end-point in at least one direction with a component orthogonal to the tangent space to $Q$.
In the case where $Q$ is taken to be a level set of the cost function $J$ this condition becomes equivalent to the existence of a variation $\delta w$ which maneuvers the end-point in a direction of increasing the cost function, i.e., not orthogonal to the gradient $\nabla J$ of the cost function.
This is opposed to local surjectivity which requires that \emph{all} directions are available, i.e., that the rank of $dV_T$ is full everywhere in the control space.

\subsection{Sufficient Resources}

In practice, $w$ is typically restricted to be drawn from some prespecified set in accordance with physical restrictions on the device implementing the control scheme or the device under control.
For example, $w$ may need to be smooth in the case of representing a physical actuator or bounded in the case of representing a laser field (in order to avoid damage to one's sample).
A typical set of restrictions is that of being bounded, smooth, and bandwidth limited.
However, it is also common to consider piecewise constant controls as these are typical in devices like frequency comb lasers (furthermore, they form good approximations to $L^p$ class functions \cite{levy2012elements} while also being significantly easier to work with).
A control systems is said to have sufficient resources if $w$ is allowed to vary within a domain such that that the end-point map $V_T$ is globally surjective on $M$ as a function of $w$.
This is to say, within a set of controls taken to be admissible the end-point map remains globally surjective on $M$.

\subsection{Sufficiency of The Three Assumptions of Control Landscape Analysis}

In this section, we present a proof that the three assumptions are sufficient for a trap free control landscape for all systems of the form \eqref{GFOCS}.
To be mathematically precise, we show that the gradient flow in control space converges to a globally optimal control, i.e., a regular critical point at the maximum value of the given cost function, regardless of the initial control chosen.

\begin{theorem}[The Three Assumptions are Sufficient for a Trap Free Landscape]
\label{sec:thm}
Consider a control system of the form \eqref{GFOCS} on a manifold $M$ and an admissible cost function $J: M\rightarrow [0,1]$ which together meet the three assumptions:
\begin{enumerate}
	\item The system is fixed time controllable for a given final time $T$.
	\item The system is locally controllable.
	\item The control field(s) are unrestricted, i.e., all smooth functions $w$ on $[0,T]$ are admissible controls.
\end{enumerate}
For such a system, gradient ascent will converge to the set of maxima of $J(V_T[w, x_0])$.
\end{theorem}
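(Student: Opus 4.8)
The plan is to reduce the statement to two geometric facts about the landscape $\Phi(w) := J(V_T[w,x_0])$, and then feed these into the standard behaviour of gradient flow. First I would write the functional chain rule $d\Phi\big|_w = dJ\big|_{V_T[w,x_0]}\circ dV_T\big|_w$ and invoke Assumption~2 together with Assumption~3: since every smooth $w$ is admissible and the system is locally controllable, $dV_T\big|_w$ is surjective onto $T_{V_T[w,x_0]}M$ for \emph{every} control $w$. Surjectivity of $dV_T\big|_w$ forces $d\Phi\big|_w = 0 \iff dJ\big|_{V_T[w,x_0]} = 0$, so the landscape has no singular critical points and its critical set is exactly $V_T^{-1}\big(\operatorname{Crit}(J)\big)$; by Assumption~1 ($V_T$ surjective) this set is nonempty and in particular contains $V_T^{-1}(p_{\max})$, where $p_{\max}$ is the unique global maximum of the admissible cost function $J$.

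Next I would establish trap-freeness. Because $dV_T$ is everywhere surjective, $V_T$ is a submersion and hence an open map on the space of controls, so for any neighbourhood $U$ of a control $w^{*}$ the image $V_T(U)$ is a neighbourhood of $p^{*} := V_T[w^{*},x_0]$. If $w^{*}$ were a local maximum of $\Phi$, choose $U$ with $\Phi(w)\le\Phi(w^{*})$ on $U$; then $J(p)\le J(p^{*})$ for every $p$ in the neighbourhood $V_T(U)$ of $p^{*}$, so $p^{*}$ is a local maximum of $J$. Admissibility of $J$ — its only local optima are the single global maximum — then forces $p^{*} = p_{\max}$ and $\Phi(w^{*}) = \max J$. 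Hence every local maximum of $\Phi$ is a global maximum, i.e.\ the landscape contains no traps, and the set of its global maxima is precisely $V_T^{-1}(p_{\max})$.

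For the gradient-ascent claim itself, along a trajectory of $\dot w = \nabla\Phi(w)$ (gradient taken with respect to the $L^2$ inner product on controls) one has $\tfrac{d}{dt}\Phi(w(t)) = \|\nabla\Phi(w(t))\|^{2}\ge 0$, and since $\Phi\le 1$ is bounded above, $\Phi(w(t))$ increases to a limit value; the $\omega$-limit set of the trajectory then lies in the critical set of $\Phi$, which by the previous steps consists only of preimages of critical points of $J$ and contains no traps. A trajectory can therefore fail to approach $V_T^{-1}(p_{\max})$ only by converging to (or accumulating on) a control whose end point is a saddle of $J$; such controls attract only their stable manifold, a set of positive codimension, so for all but a negligible set of initial controls gradient ascent converges to the set of maxima of $J(V_T[w,x_0])$. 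I would flag this saddle caveat explicitly, since it is the only gap between ``trap-free'' and literal convergence to the maxima for \emph{every} initial control.

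The hard part is not the finite-dimensional bookkeeping above but the infinite-dimensional analysis it silently relies on. One must fix a function space of controls (for instance a suitable Banach completion of $C^{\infty}([0,T])$, or treat the Fréchet case directly) on which $V_T$ is continuously differentiable, so that the submersion/open-mapping theorem used in the trap-freeness step genuinely applies; one must know the gradient flow exists for all forward time and has precompact trajectories so that $\omega$-limit sets are nonempty; and one needs a Łojasiewicz-type or Palais--Smale-type condition to upgrade accumulation to convergence and to justify the measure-zero stable-manifold statement. I would collect these as standing regularity hypotheses on $F$ and the final time $T$ and keep the proof itself focused on the landscape geometry of Steps~1--2.
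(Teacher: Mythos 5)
Your proposal is correct and follows essentially the same route as the paper: use the chain rule together with assumptions 2 and 3 to rule out singular critical points, so the landscape's critical set is the preimage of $\mathrm{Crit}(J)$, and then use admissibility of $J$ (with surjectivity of $V_T$ from assumptions 1 and 3) to conclude that all sub-optimal critical controls are saddles, leaving gradient ascent to converge to the maxima. Your explicit caveats about saddles, flow existence, and the infinite-dimensional function-space issues correspond to (and are somewhat more carefully stated than) the remark the paper itself makes immediately after its proof.
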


\begin{proof}
Consider the gradient flow of $J(V_T[w, x_0])$ in control space for a fixed but arbitrary $x_0 \in M$.
This is achieved via a smoothly parameterized family of controls in control space $w_s(t)$, parameterized by a real parameter $s$.
The flow is given by
\begin{align}
	\frac{\partial w_s(t)}{\partial s} = \nabla \left(J(V_T[w_s, x_0]) \right).
\end{align}
Assumptions 1 and 3 together are equivalent to saying that $V_T$, the end-point map, is globally surjective on $M$.
Formally, this is: $\forall x_1 \in M, \ \exists w \ \ \text{s.t.} \ \ V_T[w, x_0] = x_1$.
Assumption 2 is equivalent to saying that $V_T$, the end-point map, is locally surjective.
Formally, this is: $\forall w, d V_T\big|_{w}[\delta w]$ is full rank in $T_{V_T[w, x_0]}M$.

The critical points of $J(V_T[w, x_0])$ can now be assessed.
Recall that critical points can be classified as being of two (not mutually exclusive) types, singular and regular, as described in Sec. \ref{sec:crit}.
By the assumption of an admissible cost function, any point $p \in M$ for which $\nabla J\big|_{p}=0$ is either a global maximum, global minimum or a saddle.
Subsequently, by the assumption of local subjectivity, all controls driving the system's end-point to such a point $p\in M$ are also saddles as they can be infinitesimally varied to induce a change in the end-point which increases (or decreases) $J$.
Singular critical controls, i.e., controls $w$ at which $\langle \nabla J\big|_{V_T[w, x_0]}, \delta V_T \big|_{w}[\delta w] \rangle = 0, \ \forall \delta w$ are excluded by assumption 2.
Accordingly, the gradient flow will not encounter any controls for which $\nabla J(V_T[w, x_0])=0$ other than global maxima or minima or saddles on the control landscape.
\end{proof}

We note that, under the stated premises, this proof has excluded the existence of sub-optimal critical points other than saddles.
However, this doesn't directly assure that gradient ascent converges.
Additional caveats are required to ensure a gradient flow converges \cite{cortes2006finite}, however, it is assured that global maxima are stable critical points.
This is nothing other than the typical situation faced during any application of gradient flow in optimization.


\section{Assumption 1: Properties of Controllability}

\subsection{Controllability is Prolific}

Controllability of non-linear systems has been well studied from a mathematical perspective \cite{nijmeijer2013nonlinear, sussmann1972controllability, brockett1979feedback, hermann1977nonlinear, su1982linear, hermann1977nonlinear, klamka1996constrained, van1982observability, hirschorn1976global}.
Informally, for systems in $\mathbb{R}^n$, if the linear part of a system is controllable and the non-linear part is bounded (so as not to dominate the linear part), then the the overall system is controllable.

\subsection{Controllability of Linear Systems is Generic}

Conditions for the controllability of LTI systems 
\begin{align}
\label{LTI}
\frac{d X(t)}{dt} = AX(t) + Bw(t)
\end{align}
are well understood.
For example, the set of vectors
\begin{align}
	\{ B, AB, A^2B, \ldots, A^{(N-1)}B \}
\end{align}
forms a basis for $\mathbb{R}^N$ (and equivalently $B$ is a \emph{cyclic vector} for $A$)
if and only if the system \eqref{LTI} is controllable. 
It was elaborated in \cite{Russell20160210}, and elsewhere, that this holds for all $A,B$ other than a null set.
That is, if $A,B$ are chosen randomly, then with probability of $1$ the resulting system will be controllable.
Furthermore, for any non-controllable system there exists a small perturbation to $A$ and/or $B$ which is sufficient to restore controllability.

\subsection{Controllability of Non-Linear Perturbations to Linear systems}

The LTI case can be used as a toehold for understanding the controllability of a large class of non-linear control problems in $\mathbb{R}^N$ \cite{lukes1972global}.
The critical observation applies for systems of the form
\begin{align}
	\label{lukesform}
	\frac{d x(t)}{dt} = Ax(t) + Bw(t) + f(x(t), w(t)),
\end{align}
wherein $f$ is any smooth and bounded function.
It is shown in the central result of \cite{lukes1972global} that if the `linear part' of the system \eqref{lukesform} is controllable and the non-linear part, given by $f$, is smooth and bounded, then the system is controllable.
Furthermore, boundedness is shown to be sufficient but not necessary; a weaker Lipschitz condition on $f$ is sufficient to infer the controllability of \eqref{lukesform} from the controllability of the linear part.
Intuitively, this result is saying that if the linear part is controllable and the non-linear part fails to dominate the linear part, then the combination of the two parts retains controllability.
This result, in combination with the observation that almost all LTI systems are controllable, demonstrates that a very large class of non-linear control systems are controllable.

\section{Assumption 2: Two Approaches}
\subsection{Applying the Parametric Transversality Theorem to the Local Controllability Assumption}
\label{sec:ptt}
The parametric transversality theorem (or Thom's lemma) \cite{thom1956lemme} informally states that within a parameterized family of smooth maps between smooth manifolds, those maps transverse to a given submanifold of their target manifold, under a certain condition on the family of maps, are generic.
Which is to say, the maps failing this condition comprise all but a null set (the role of this theorem in quantum control is discussed in \cite{aatfme}).
A smooth map of manifolds $\phi: M \rightarrow N$ is called \emph{transverse} to a sub-manifold $Q < N$ (denoted $\phi \pitchfork Q$) if
\begin{align}
	\text{Im}\left(d \phi \big|_p\right) \oplus T_p Q \equiv T_p N.
\end{align}

We now state Thom's lemma more formally: if a family of maps is defined by $\phi_s(p) = \psi(s,p)$ where $\phi: M\times S \rightarrow N$ has the property that $\psi \pitchfork Q$, then $\phi_s \pitchfork Q$ for all but a null set of $s \in S$.
In terms of a control system, $V_T$ being transverse to a given submanifold of the state space, say a level set of fidelity, implies that there are no singular critical points on that sub-manifold.
This is due to the fact that the end-point can be steered away from that submanifold (and thus up the landscape in the case of level sets of fidelity) by an infinitesimal change in the control.
Thus, if a parameterized family of control systems have no traps in the preimage (in control space) of a given submanifold $Q$ of the systems state space, then only a null set of systems from the parameterized family can have a trap in the preimage of the same submanifold $Q$.

Consider a parameterized family of control systems
\begin{eqnarray}
	\frac{d x(t)}{dt} = F(x(t), w(t), s)
\end{eqnarray}
with $s \in S$ such that $d V_T$ is locally surjective (when both $s$ and $w$ are infinitesimally varied).
Such a system clearly has $V_T \pitchfork L_{\alpha}$ (depending on $s$ also) for each level set $L_{\alpha}$ of the cost function (in fact for every submanifold $Q < N$ on the system's state space).
This implies that each level set can only possess a trap for a null set of $s \in S$.
Furthermore, a countable union of null sets is itself null \cite{halmos2014measure}, therefore \emph{any} countable set of submanifolds can only contain a trap for a null set of $s \in S$.
Such a possibility is, in this sense, both rare and sensitive to infinitesimal changes in $s$.

Generic transversality notwithstanding, it is in principle possible for the null sets associated to each level set (or any covering set of submanifolds, such as any foliation) to union to a non-null set (or even the whole control space).
This possibility, which will form the basis of future work, seems highly implausible but cannot be completely excluded on the basis that $V_T$ is locally surjective (with both $s$ and $w$ permitted to vary).

\subsection{A Lipschitz Condition for Local Controllability}

While the geometric genericness results of Sec.\eqref{sec:ptt} allow the assessment of the typical landscape critical point structure within families of systems, these tools are not always needed; in many cases a stronger conclusion, which involves no probabilistic statements, can be made.
We will proceed, as in the case of assumption 1, to use LTI systems as a toehold in order to derive a Lipschitz condition sufficient for local controlability.
In the case of planar systems, we explicitly obtain one such Lipschitz constant.
This Lipschitz condition is on the norm of the \emph{Jacobian} of the non-linear part of the system, rather than on the non-linear part itself as in the case of assumption 1.

As described in \cite{Russell20160210} and elsewhere, local and global controllability are equivalent for LTI systems.
Accordingly, within the class of LTI systems, local controllability holds with probability one because global controllability holds with probability one.
We now specialize to a result for planar systems for which the non-linearity only involves the state, rather than both the state and the control.
Obtaining similar Lipschitz conditions for a wider range of systems will be part of future work.

Consider a planar system of the form
\begin{align}
	\label{eqn:ourform}
	\frac{d x}{d t} = Ax(t) + Bw(t) + f(x(t)).
\end{align}
Apply the variation $w \mapsto w + \delta w$, $x \mapsto x + \delta x$ in order to find the differential equation obeyed by $\delta x$
\begin{align}
	\label{eqn:ourformloc}
	\frac{d (\delta x)}{d t} = \left(A + (Df)(t) \right)\delta x(t) + B \delta w(t)
\end{align}
and note that local controlability of \eqref{eqn:ourform} is equivalent to global  controllability of \eqref{eqn:ourformloc} for any given $w$ and the corresponding trajectory $x(t)$.
Given that the linear system is controllable, one has that $[B, AB]$ is full rank.
Hence, $B$ is not an eigenvector of $A$ ($AB \neq \lambda B$ for any $\lambda \in \mathbb{R}$).
Subsequently, the function of $\lambda$ given by $|| AB - \lambda B ||$ has a minimum and is clearly both positive and unbounded above. 
To find the $\mathrm{argmin}$ $\lambda^*$, differentiate $|| AB - \lambda B ||^2$ w.r.t. $\lambda$.
It is straightforward but tedious to show that $\lambda^* = \frac{\langle AB, B \rangle}{||B||^2}$ and the corresponding minimum value of $|| AB - \lambda B ||$ is given by
\begin{align}
	m(A,B):= \frac{1}{|| B ||} \left\| AB - \frac{\langle AB, B \rangle}{{||B||}^2}\right\|.
\end{align}
This planar non-linear system is locally controllable if $A + Df$ does not have $B$ as an eigenvector at any time, as this is sufficient for the Kalman matrix to be full rank at all times (\cite{sontag2013mathematical}, Ch. 3, Corollary 3.5.18). (In fact, only a single time for which the Kalman matrix matrix is full rank is be needed, which follows from $A + Df$ not having $B$ as an eigenvector for a single time.)

Assume that $B$ is an eigenvector of $(A+Df)$, i.e., $(A+Df)B = \gamma B$ for some $\gamma \in \mathbb{R}/ \{0\}$.
Then $(AB - \gamma B) = -Df B$ and subsequently, $||AB - \gamma B|| = || Df B||$.
By the above results and the definition of a subordinate matrix norm (induced from the standard vector norm), $0 < m(A,B) \leq \|Df B\| \leq \|Df\| \|B\|$.
Rearranging gives $||Df|| \geq \frac{m(A,B)}{||B||}$ (a necessary condition for $B$ to be an eigenvector of $A+Df$).
Hence, $||Df|| < \frac{m(A,B)}{\|B\|}$ is a sufficient condition for the local surjectivity of system \eqref{eqn:ourform}.
Intuitively, this Lipschitz condition says that a locally controllable system cannot be rendered not locally controllable by the addition of a non-linear term in the state unless the non linear terms varies faster in state space (upto a constant multiple) than the linear part.

In dimensions greater than two a similar argument can be made, establishing a similar sufficient Lipschitz condition on the norm of the Jacobian of $f$.
However, an explicit form of the Lipschitz constant is not known (this has no bearing on the conclusion that local controllability holds if the condition is met).
The argument proceeds from the non-zero value of the determinant $\text{det}([B, AB, \ldots, A^{n-1}B])$ (as the system is controllable).
One can conclude from the smoothness of all maps involved that the determinant corresponding to local controllability, $\text{det}([B, (A+Df)B, \ldots, (A+Df)^{n-1}B])$, cannot be zero unless $|| Df || \geq \kappa$ for some positive constant $\kappa$.

\section{Numerical Assessment of Example Non-Linear Control Landscapes}

In this section, we give numerical simulation results for a large class of planar non-linear control systems.
We investigated the landscape critical point topology for the following class of systems
\begin{align}
	\begin{pmatrix} \dot{x}^0\\ \dot{x}^1 \end{pmatrix} = & \; A \begin{pmatrix} x^0\\ x^1 \end{pmatrix} + B w(t) + C_1 \begin{pmatrix} \cos(x^0) \\ \cos(x^1) \end{pmatrix} + S_1 \begin{pmatrix} \sin(x^0)\\ \sin(x^1) \end{pmatrix} + \nonumber \\ & \; C_2 \begin{pmatrix} \cos(2 x^0)\\ \cos(2 x^1) \end{pmatrix} + S_2 \begin{pmatrix} \sin(2 x^0)\\ \sin(2 x^1) \end{pmatrix} \label{eqn:nonlinear_example}
\end{align}
which are parameterized by square, real matrices $C_1, C_2, S_1, S_2 \in \text{Mats}(\mathbb{R}, 2)$.
Specifically, we investigated 100 randomly generated systems, all of which met the three assumptions.
This facilitated the confirmation of the conclusions of theorem \ref{sec:thm} in the cases assessed; the results obtained were consistent with these conclusions.
The simulations were based on the fidelity function $J(x) = \| x - G \|$ wherein $G$ denotes the goal.

\subsection{Assumption 1}

Systems from the class \eqref{eqn:nonlinear_example} were generated by constructing $A$ and $B$ with elements uniformly randomly chosen from the interval (-1,1).
Similarly, $C_1$, $S_1$, $C_2$, and $S_2$ were generated with elements sampled from the interval $(-0.1, 0.1)$.
The non-linear part of all of these systems is bounded and subsequently Lipschitz:
\begin{align}
& \left\| C_1 \begin{pmatrix} \cos(x^0)\\ \cos(x^1) \end{pmatrix} + S_1 \begin{pmatrix} \sin(x^0)\\ \sin(x^1) \end{pmatrix} + C_2 \begin{pmatrix} \cos(2 x^0)\\ \cos(2 x^1) \end{pmatrix} + S_2 \begin{pmatrix} \sin(2 x^0)\\ \sin(2 x^1)\end{pmatrix} \right\| \\ & \leq 2\left(||C_1|| + ||S_1|| + ||C_2|| + ||S_2|| \right). \nonumber
\end{align}
This is sufficient by the results of \cite{lukes1972global} to ensure controllability with probability one in the space of all $A, B$ irrespective of the parameter values $C_1$, $S_1$, $C_2$, and $S_2$.

\subsection{Assumption 2}

The Lipschitz constant $\frac{m(A,B)}{||B||}$ can be compared to the maximum value of $||Df||$:
\begin{align}
||Df|| & = \bigg\| C_1 \begin{pmatrix} -\sin(x^0) & 0 \\ 0 & -\sin(x^1) \end{pmatrix} + S_1 \begin{pmatrix} \cos(x^0) & 0 \\ 0 & \cos(x^1) \end{pmatrix} + \\ \nonumber & C_2 \begin{pmatrix} -2\sin(x^0)  & 0 \\ 0 & -2\sin(x^1) \end{pmatrix} + C_2 \begin{pmatrix} 2\cos(x^0)  & 0 \\ 0 & 2\cos(x^1) \end{pmatrix} \bigg\| \\ \nonumber
& \leq \sqrt{2} \left(||C_1|| + ||S_1|| + 2||C_2|| + 2||S_2|| \right)
\end{align}
to find the parameter values for which local controllability is assured to hold.
Systems generated which did not satisfy $\frac{m(A,B)}{||B||} > ||Df||$ were rejected.

\subsection{Assumption 3}

As no restriction was placed on the admissible controls during our optimizations, assumption 3 is clearly satisfied (upto numerical precision) for all systems generated.

\subsection{Optimization method}

Following the approach of \cite{joe2013topology}, successful controls were obtained using a gradient search based on the D-MORPH method \cite{rothman2005quantum}.
This method is based on traversing a smoothly parameterized set of control fields by smoothly updating the control so as to increase the fidelity.
Formally, this procedure is captured by the equation
\begin{align}
\label{eqn:dmorph}
  \frac{\partial w(s,t)}{\partial s} = \beta \frac{\delta J}{\delta w(s,t)}, \qquad \beta > 0.
\end{align}
Here, the update process is parameterized by $s$ (known as a homotopy parameter) such that $w(0,t)$ is the initial control and $\beta$ plays the role of a learning rate fixed from the outset.
In order to update the control, the right hand side of Eqn. \eqref{eqn:dmorph} needs to be evaluated.
Application of the chain rule yields 
\begin{align}
  \frac{\delta J}{\delta w(s,t)}&=
  \frac{\delta J}{\delta x(s,t)} \cdot
  \frac{\delta x(s,T)}{\delta w(s,t)}
\end{align}
where
\begin{align}
	\frac{\delta J}{\delta x(s,t)} &= 
	\frac{G - x(s,T)}{\| G - x(s,T)\|}
\end{align}
follows directly from the choosing the fidelity function to be the Euclidean distance to the goal $G$.
The second factor can be be evaluated as follows:  
\begin{align}
  \frac{\delta x(s,T)}{\delta w(s,t)} &= 
  M(s,T)M^{-1}(s,t)B.
\end{align}
Here, $M_{ij}(s,t) = \partial x_i(s,t) / \partial x_j(s,0)$ is a square, real matrix (known as the \emph{transition matrix} or \emph{propagator}) satisfying the equation
\begin{align}
	\label{eqn:diffm}
  \frac{\partial M(s,t)}{\partial t}  &= (A + Df(t))M(s,t); \qquad M(s,0) = I,
\end{align}
and is obtainable via standard integration methods.
Hence, the right hand side of Eqn. \eqref{eqn:dmorph} can be explicitly expressed as
\begin{align}
	\label{eqn:dmorphfull}
	\frac{\partial w}{\partial s} = -\beta \left( G- x(s,t) \right) \cdot \left(M(s,T)M^{-1}(s,t)B\right).
\end{align}
Note that this method uses two separate instances of numerical integration, 
both of which were implemented using Matlab's ODE45 equation solver: 
Equation \eqref{eqn:dmorphfull} is solved by integrating with respect to $s$, 
however, within each integration step 
$M(s,t)$ is obtained by integrating Eqn. 
\eqref{eqn:diffm} with respect to $t$ (for fixed $s$).

\subsection{Results}

For each system generated, 10 random goal points were generated uniformly randomly with $x,y$ coordinates in the range $(-2,2)$.
For each goal point, 10 random initial controls were generated as the cumulative sum (for the propose of smoothing) of unit amplitude white noise.
Note that the negative of the distance to the goal was used as a cost function.
For each system, goal, and initial control, the D-MORPH algorithm was run and the results recorded.
\begin{figure}[!htbp]
	\centering
    \includegraphics[width=0.85\textwidth]{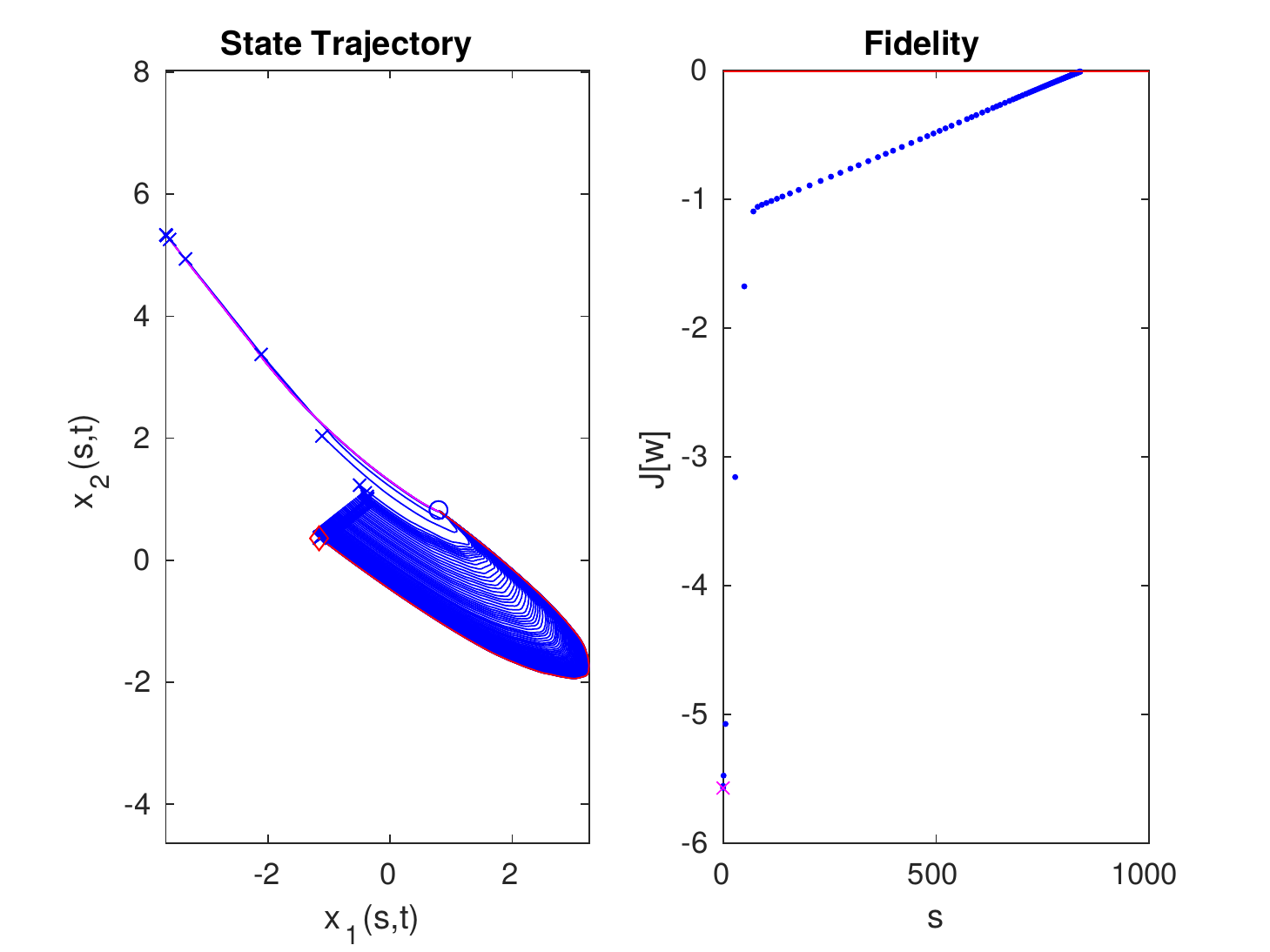}
	\caption{Trajectories (left) and the associated fidelity values (right) during a typical gradient ascent optimization.}
	\label{fig:explots1}
\end{figure}
\begin{figure}[!htbp]
  \centering
    \includegraphics[width=0.85\textwidth]{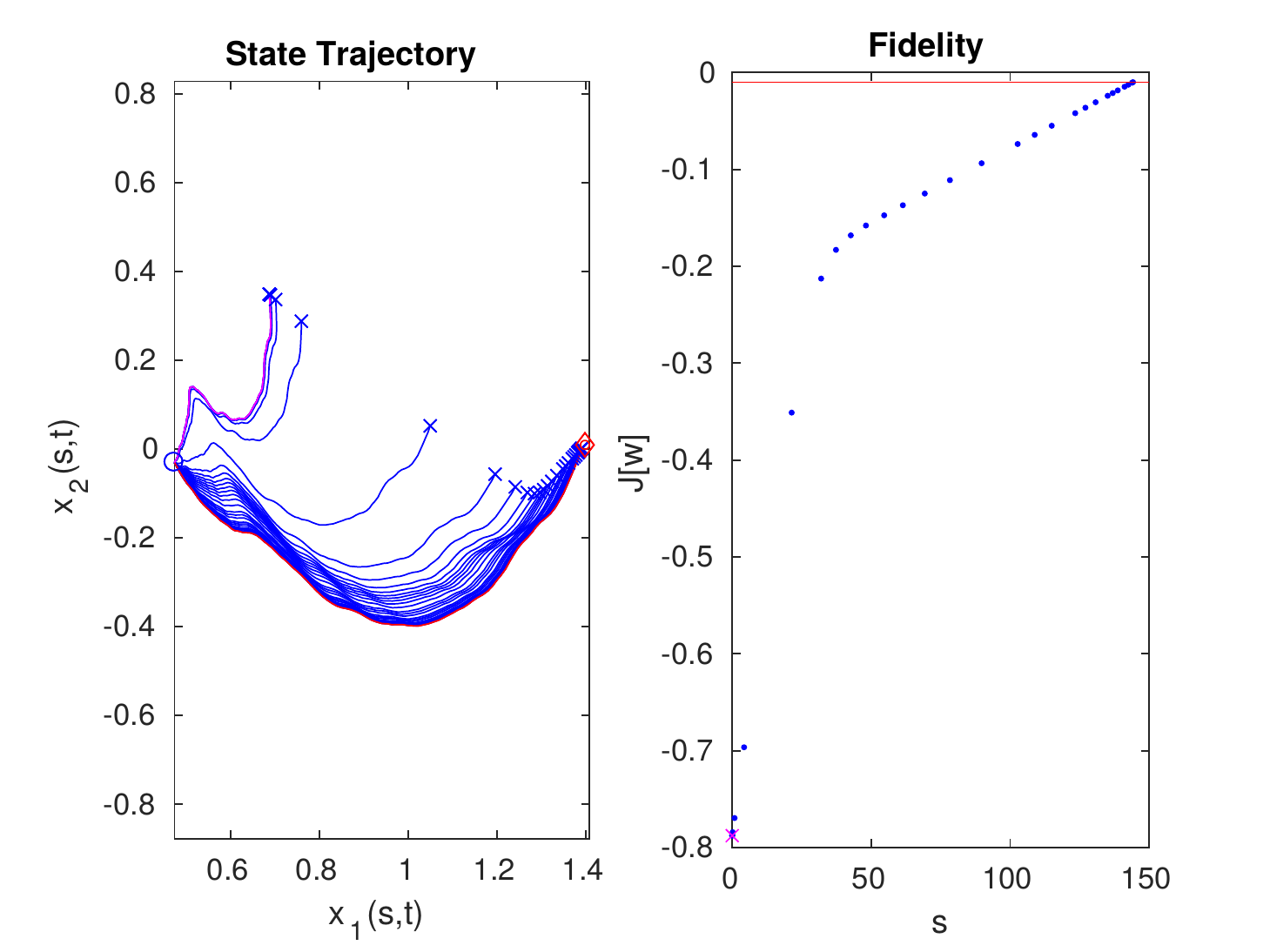}
	\caption{Trajectories (left) and the associated fidelity values (right) during a typical gradient ascent optimization.}
    \label{fig:explots2}
\end{figure}
\begin{figure}[!htbp]
  \centering
    \includegraphics[width=0.85\textwidth]{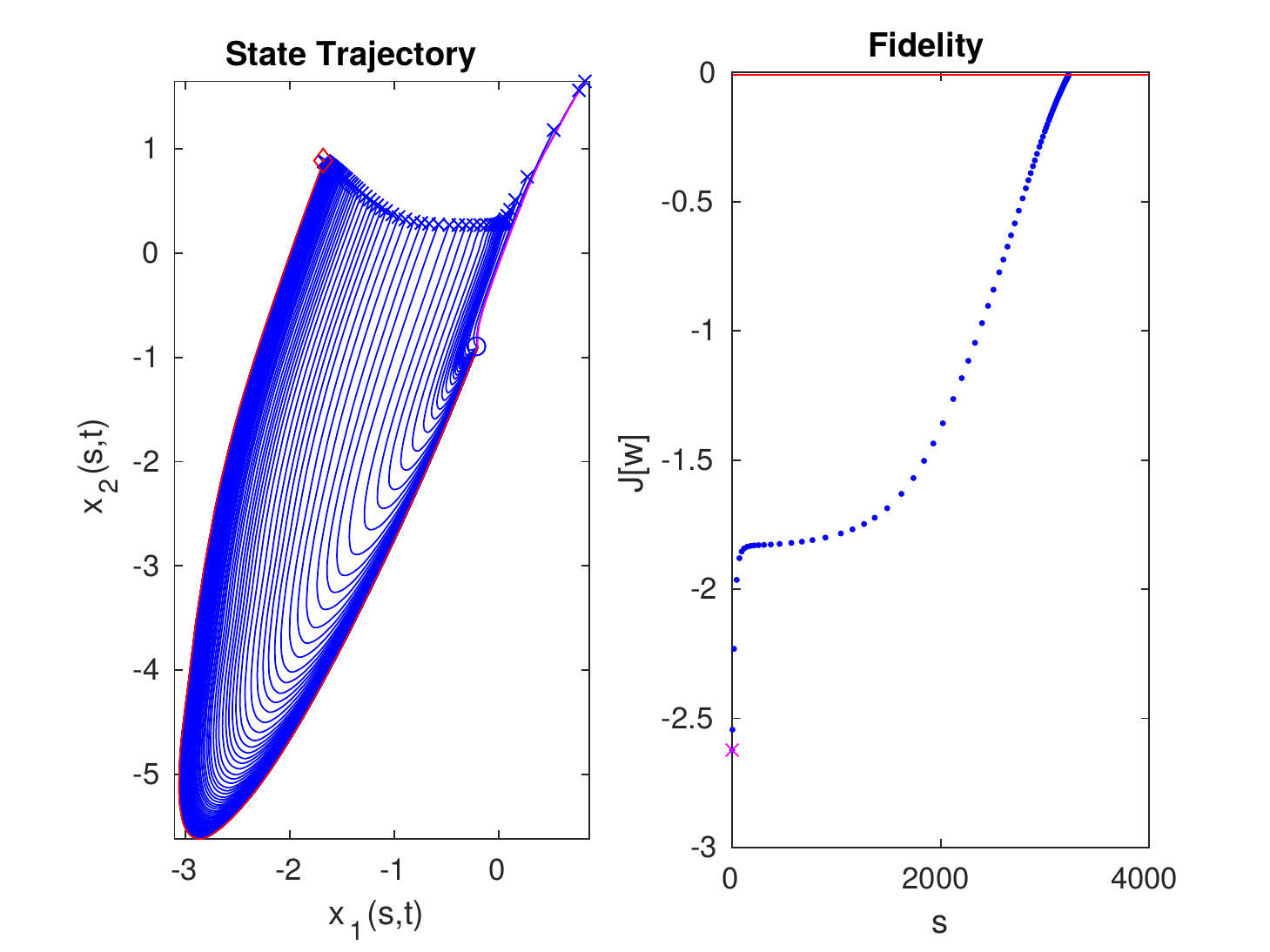}
	\caption{Trajectories (left) and the associated fidelity values (right) during a typical gradient ascent optimization.}
	\label{fig:explots3}
\end{figure}
\newpage
Within the 10,000 optimization runs completed, no landscape traps were detected.
In 96\% of optimizations the system converged to the goal directly, whereas, the remaining 4\% required further investigation.
Specifically, 2\% of optimizations timed out due to slow but progressing convergence and 2\% terminated prematurely when the fidelity decreased due to the limits of numerical precision.
In each case, a stochastic local optimization algorithm was initiated (discussed in more detail below) and the fidelity improved.
Hence, the sub-optimal controls to which gradient ascent had converged were not traps in all cases assessed.

Figures \ref{fig:explots1}, \ref{fig:explots2}, and \ref{fig:explots3} depict the trajectories and associated fidelity values during typical gradient ascent runs.
In each figure, the left panel depicts state trajectories for different values of the homotopy parameter $s$: the blue circle indicates the initial state $x_0$, the red cross indicates the goal state, the blue curves show the trajectories $x(t)$, and the blue crosses indicate the end-points.
The right panes depict the corresponding fidelity profiles and indicate monotonic convergence to the goal (the red line shows the threshold for convergence).
Figures \ref{fig:explots1}, \ref{fig:explots2} indicate rapid convergence, whereas, the flat region of \ref{fig:explots3} represents a likely saddle point (or point with small Hessian eigenvalues, i.e., a flat region) on the landscape.
These features appear to be typical for the optimization profiles of the class of systems studied numerically in this work.
\begin{figure}[!h]
  \centering
    \includegraphics[width=0.75\textwidth]{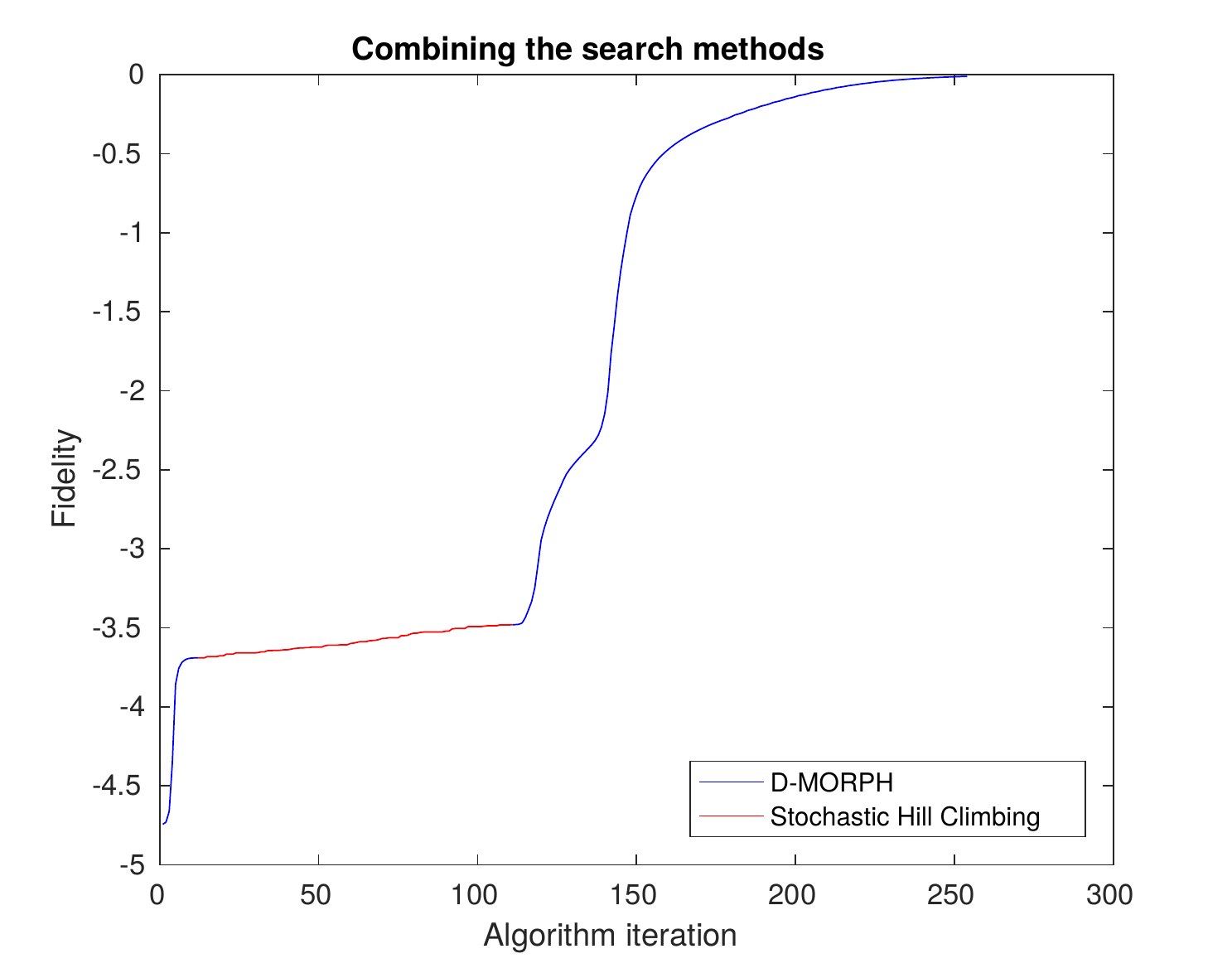}
    \caption{An optimization run plotted with algorithm iteration step on the $x$-axis and the fidelity on the y-axis. The initial blue segment represents the D-MORPH  part of the optimization, which terminated prior to reaching the goal. The red section represents the stochastic hill climbing phase, which can be seem to improve the fidelity until D-MORPH is restarted, as indicated by the second blue section which proceeds to .}
 \label{myamazinglabel}
\end{figure}
For the 4\% of optimization runs which either timed out or terminated at a sub-optimal control, a stochastic hill-climbing algorithm \cite{Wattenberg,russell2010artificial} was applied in order to assess the neighborhood of the point reached on the control landscape.
The stochastic hill-climbing algorithm used sequentially applies small, random variations to the control and evaluates the new fidelity until a variation is found which improves the fidelity; the control is then updated with this variation.
In each case the stochastic hill-climbing algorithm yielded an increase in fidelity: small but random moves in control space were able to improve the control.
This confirms that none of the runs terminated due to a trap.
Figure \ref{myamazinglabel} depicts the fidelity monotonically increasing during consecutive applications of the D-MORPH and stochastic hill-climbing algorithms for one such case.

\section{Outlook and Conclusions}

\subsection{Results Summary}
We have demonstrated that the three assumptions
\begin{enumerate}
	\item the system is fixed time controllable for a given final time $T$,
	\item the system is locally controllable,
	\item the control field(s) are unrestricted, i.e., all smooth functions $w$ on $[0,T]$ are admissible controls,
\end{enumerate}
which are known to be sufficient for trap free landscapes in quantum control, are also sufficient conditions in non-linear control systems of the form \eqref{GFOCS} when suitably expressed and when a suitable cost function $J$ is used.
We have further demonstrated that these assumptions hold for a large class of systems in the case of end-point control and with no run-time cost.
The controllability assumption generically holds (probability one) for systems with controllable linear part and which satisfy a Lipschitz condition on the non-linear part.
Similarly, the parametric transversality theorem was used to demonstrate the analogous rarity, in a specific sense, of singular critical points when considering parameterized families of control systems.
Finally, in the planar case we established a novel Lipschitz condition on the Jacobian of the non-linear part of \eqref{eqn:ourform}, sufficient for local controllability: $\|Df\| \leq \frac{\|m(A,B)\|}{\|B\|}$.
Seeking explicit formulas for such constants in the case of higher than two dimensional systems will be the topic of future work.
\begin{table}[!h]
\centering
\caption{Sufficient conditions for each assumption to hold}
\label{my-table-label}
\begin{tabular}{|l|l|}
\hline
\textbf{Assumption}   & \textbf{Sufficient Condition}                                \\ \hline
Controllable          & $f$ a Lipschitz function and $A,B$ full Kalman rank           \\ \hline
Locally Controllable  & $\|Df\|$ bounded, with constant known in the planar case       \\ \hline
Unrestricted Controls & Control space sufficient for $V_T$ to be globally surjective \\ \hline
\end{tabular}
\end{table}

\subsection{Limitations and Further Work}
In this work we restricted attention to fidelity functions depending only on the end-point and without contribution from run-time cost terms.
As discussed, run-time costs, while typically absent in quantum control, are highly prevalent in engineering applications.
Accordingly, it is important to assess the role they play in the structure of the corresponding control landscapes as this will broaden the scope of application of the results.
We also made no attempt to rigorously assess the rate of convergence during optimization, which seems to differ significantly between systems.
Work on this, especially attempting to establish criteria for efficient optimization, will also be the basis of further work.

\section*{Bibliography}
\bibliographystyle{unsrt}
\bibliography{newbib}

\end{document}